\DeclareMathOperator*{\argmax}{arg\,max}
\DeclareMathOperator*{\esssup}{ess\,sup}
\DeclareMathOperator*{\ER}{\mathrm{ER}}
\newcommand{\tc}{T_{\scriptscriptstyle \text{C}}}
\newcommand{\twg}{T_{\scriptscriptstyle \text{G}}}
\newtheorem{theorem}{Theorem}%[section]
\newtheorem{lemma}[theorem]{Lemma}
\newtheorem{remark}[theorem]{Remark}
\begin{document}
\title{Optimality of Graph Scanning Statistic for Online Community Detection} 

% %%% Single author, or several authors with same affiliation:
% \author{%
%   \IEEEauthorblockN{Stefan M.~Moser}
%   \IEEEauthorblockA{ETH Zürich\\
%                     ISI (D-ITET)\\
%                     CH-8092 Zürich, Switzerland\\
%                     Email: moser@isi.ee.ethz.ch}
% }

%%% Several authors with up to three affiliations:
\author{%
  \IEEEauthorblockN{Liyan Xie and Yao Xie}
  \IEEEauthorblockA{H. Milton Stewart School of Industrial and Systems Engineering\\
                    Georgia Institute of Technology\\
                    Atlanta, GA 30332, United States\\
                    Email: lxie49@gatech.edu, yao.xie@isye.gatech.edu}
}

\maketitle

%%%%%%
%% Abstract: 
%% If your paper is eligible for the student paper award, please add
%% the comment "THIS PAPER IS ELIGIBLE FOR THE STUDENT PAPER
%% AWARD." as a first line in the abstract. 
%% For the final version of the accepted paper, please do not forget
%% to remove this comment!
%%
\begin{abstract}
Sequential change-point detection for graphs is a fundamental problem for streaming network data types and has wide applications in social networks and power systems. Given fixed vertices and a sequence of random graphs, the objective is to detect the change-point where the underlying distribution of the random graph changes. In particular, we focus on the local change that only affects a subgraph. We adopt the classical Erd\H{o}s-R\'enyi model and revisit the generalized likelihood ratio (GLR) detection procedure. The scan statistic is computed by sequentially estimating the most-likely subgraph where the change happens. We provide theoretical analysis for the asymptotic optimality of the proposed procedure based on the GLR framework. We demonstrate the efficiency of our detection algorithm using simulations. 
\end{abstract}

\section{Introduction}

Change-point detection is a fundamental problem for network data, such as power systems \cite{rovatsos2017statistical,chen2015quickest}, sensor networks \cite{xie2020sequential}, and social networks \cite{wang2017fast,peel2015detecting,li2017detecting}. Network data can be modeled as graphs. For instance, in social networks, each node represents users, and the edge represents the connectivity between users. We consider the Erd\H{o}s-R\'enyi model \cite{erdHos1960evolution}, which is parameterized by the probability of having an edge between two nodes. 
In this paper, we consider the detection of a {\it local} change in the graph, which only affects the distribution of a {\it subgraph}. The detection procedure is to form the scan statistic based on the Erd\H{o}s-R\'enyi model. In particular, we treat the affected subgraph as unknown anomaly information, and apply the generalized likelihood ratio test \cite{lai-ieeetit-1998} to form the scan statistic, utilizing the graph scanning techniques \cite{priebe2005scan,sharpnack2016detecting}. 

In order to give a computationally efficient algorithm and theoretical calibration, we assume the size of the subgraph affected by the change is {\it known}. More specifically, when the change happens, only a subset of the graph, of known size, is affected by the change and has a different distribution, while the distribution for the rest of the graph remains the same. The problem of local change-point detection is challenging for two reasons: (i) it is not clear whether there is a change; (ii) if there is a change at some time, it is not clear which subgraph contains the change. 

The major motivating application of our study is community detection. In particular, we are interested in the detection of the {\it emergence} of a community in a network that is homogeneous in the beginning. Such a problem is essential for dynamic networks. For example, the social network can start from a homogeneous state, and then evolve over time and form a community. Usually, the interactions between nodes within the community are more dense than other parts of the network. Another example is ambient noise monitoring in seismic sensor networks. More specifically, the cross-correlation function between the sensors affected by the change will have a significant peak at the time of the change. Meanwhile, such waveform does not exist for cross-correlation functions between affected sensors and unaffected sensors, and among unaffected sensors. Therefore, this problem, mathematically, becomes detecting a local change in a sequence of graphs \cite{he2018sequential}. 

In this paper, we focus on the parametric approach for constructing {\it scan statistics} to detect a local change in a sequence of graphs. For simplicity, we adopt the Erd\H{o}s-R\'enyi model (while the analysis can be generalized to more complicated models), where each edge exists with probability $p_0\in(0,1)$ and independently with each other. After the change, the affected subgraph still follows the Erd\H{o}s-R\'enyi model, but with a different parameter $p_1\in(0,1)$. We consider a {\it sequential} detection setting and prove the optimality of the online detection procedure in the sense that the detection delay matches the well-known lower bound. The main idea of the proof is adopted from the seminal work on generalized likelihood ratio test \cite{lai-ieeetit-1998}.

This paper is related to works in community detection and graph scan statistics. 
In \cite{marangoni2015sequential}, three likelihood ratio based algorithms were developed for detecting communities in the Erd\H{o}s-R\'enyi graph, including the exhaustive search, the mixture, and the hierarchical mixture methods. Theoretical approximation was also given in \cite{marangoni2015sequential} to characterize the false alarms. 
In \cite{arias2014community}, the community detection for Erd\H{o}s-R\'enyi graphs was considered as a hypothesis testing problem, where the goal is to find a test function that takes the random graph as input and claim whether there is a community or not. The detectability of this problem in the asymptotic dense regime, when the connection probability $p_0$ is large enough, was provided in \cite{arias2014community}. Later on, information-theoretic lower bounds for the asymptotically sparse regime, when the connection probability $p_0$ is small enough, was studied in \cite{verzelen2015community}.

The rest of the paper is organized as follows. We present the problem setup in Section~\ref{sec:setup}. The detection procedure is detailed in Section~\ref{sec:method}. We present the optimality study in Section~\ref{sec:optimality}. Numerical examples are presented in Section~\ref{sec:numerical} to support the theoretical findings. Finally Section~\ref{sec:conclusion} contains our concluding remarks.

\section{Problem setup}\label{sec:setup}

Given a network with $N$ sensors (nodes) numbered as $\{1,\ldots, N\}$, the dynamic graphical structure is observed as a sequence of {\it undirected} adjacency matrixes $G^{(1)},G^{(2)},\ldots,$ where $G^{(t)} \in \{0,1\}^{N\times N}$ characterizes the edge or interaction information between different nodes, i.e., $G_{ij}^{(t)} =1$ if and only if there is an edge between node $i$ and node $j$ at time $t$.
Consider the Erd\H{o}s-R\'enyi model, denoted as $\ER(N,p)$, where the graph is constructed by connecting nodes randomly. Each edge is included with probability $p$ independently, i.e., $\mathbb P(G_{ij}^{(t)} = 1) = p$. The sequence of observations $G^{(1)},G^{(2)},\ldots$ are independent realizations of the Erd\H{o}s-R\'enyi model.

Assume that there is a change-point at an {\it unknown} time $\tau$ that changes the distribution of a subgraph with nodes indexed by $V^{*} \subset \{1,\ldots,N\}$. Before the change, the full graph follows the model $\ER(N,p_0)$ with connection probability $p_0$. After the change, the subgraph $V^*$ follows $\ER(|V^*|,p_1)$, i.e., the connection probability inside the subgraph $V^*$ becomes $p_1$, with everything else the same. Here $|V^*|$ denotes the cardinality of the set $V^*$. Usually, the true subgraph $V^*$ is unknown as it represents the anomaly information. We assume that the cardinality of $V^*$ equals a known constant $n$ with $n <N$. In most applications, we have $n\ll N$ which means that we are only interested in the local graphical changes.

Although the true subgraph where the change happens is unknown, there are only a {\it finite} number of possible subgraphs when the total number of nodes $N$ is fixed. Denote all possible subgraphs as: 
\begin{equation*}\label{eq:subgraph}
  \mathcal V = \{V^{(1)}, \dots, V^{(d)}\}.  
\end{equation*}
Note that the number of all possible subgraphs can be upper bounded by ${N \choose n}$ and we further have 
\begin{equation*}\label{eq:constr_d}
d \ll {N \choose n},
\end{equation*}
if we are only interested in locally {\it connected} subgraphs, which is a reasonable assumption for community detection tasks where the change tends to happen within a small neighborhood. 

In summary, the problem of detecting a local change for the underlying Erd\H{o}s-R\'enyi model becomes the following hypothesis testing problem:
\begin{equation}
\label{eqn: hypo}
\begin{array}{ll}
H_0: & \mathbb P (G_{ij}^{(t)} = 1) = p_0,\  \forall i,j; \ t=1,2,\ldots    \\
H_{1}: & \mathbb P (G_{ij}^{(t)} = 1) = p_0,\  \forall i,j; \ t = 1,2,\ldots, \tau-1   \\
&   \mathbb P (G_{ij}^{(t)} = 1) = p_1, \ \forall i,j \in V^*; \ t = \tau,\tau+1,\ldots  \\
&  \mathbb P (G_{ij}^{(t)} = 1) = p_0, \ \forall i \text{ or } j  \notin V^*; \ t = \tau,\tau+1,\ldots 
\end{array}
\end{equation}
where $\tau$ represents the change-point. This hypothesis testing problem is illustrated in Fig.~\ref{Fig: change-point}, where the post-change subgraph contains only three nodes and is shown in highlight. 

\begin{figure}[ht!]
\begin{center}
    \includegraphics[width=1\linewidth]{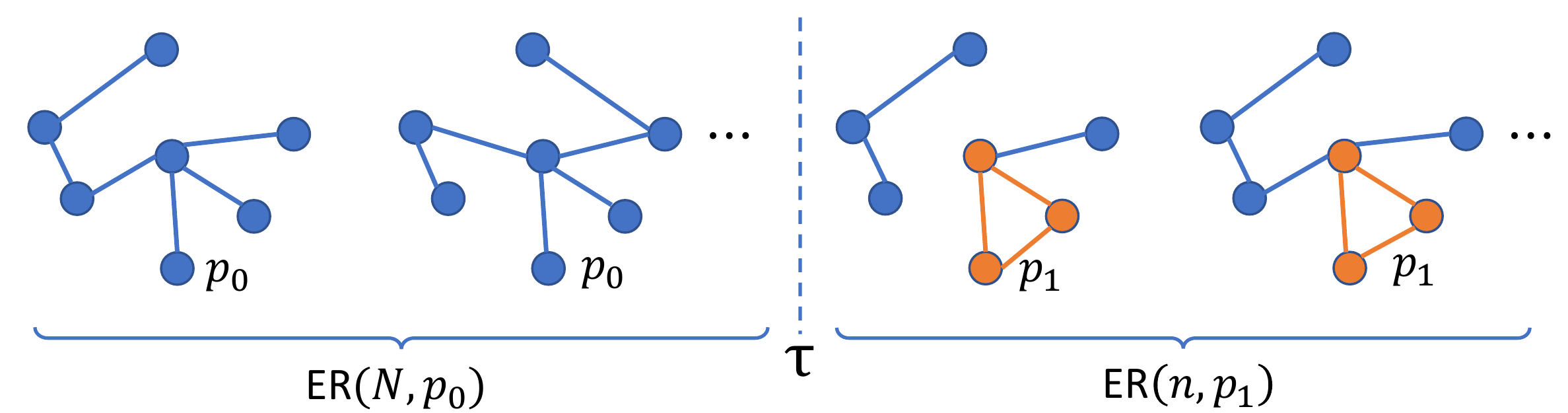}
\end{center}
\caption{Graphs prior to the change-point in time $\tau$ follow the Erd\H{o}s-R\'enyi model with coneection probability $p_0$. After the change-point $\tau$, the subgraph (shown in highlight) follows the Erd\H{o}s-R\'enyi model with connction probability $p_1\neq p_0$, with everything else the same. We are particularly interested in detecting the local change in the subgraph.}
\label{Fig: change-point}
\end{figure}
Given access to a sequence of graph observations $G^{(1)},G^{(2)},\ldots$, if they are sampled from the hypothesis $H_1$ in the model \eqref{eqn: hypo}, our objective is to design a stopping time that can detect the unknown change-point $\tau$ as quickly as possible. Meanwhile, if the data are sampled from the hypothesis $H_0$ in the model \eqref{eqn: hypo}, it is desired to have less false alarms as possible. Here we assume the connection probabilities $p_0, p_1$ are known, but the subgraph where the change happens is {\it unknown}.

\section{Detection Procedure}\label{sec:method}

The detection problem \eqref{eqn: hypo} can be solved based on statistical change detection methodology, which we describe in this section. We start by introducing the basic cumulative sum (CUSUM) procedure for change detection when the subgraph $V^*$ is known, and then study the generalized likelihood ratio (GLR) test for unknown subgraphs. 

The log-likelihood ratio between the pre- and post-change distributions plays a key role in sequential change detection. For the Erd\H{o}s-R\'enyi model $\ER(N,p_0)$ before change, we have the likelihood function of $G^{(t)}$ is
\[
L(G^{(t)},p_0) =  \prod_{\substack{1\leq i<j\leq N }} p_0^{G_{ij}^{(t)}} (1-p_0)^{1- G_{ij}^{(t)}},
\]
and the likelihood function for post-change model is similarly defined. Therefore, the log-likelihood ratio for the hypothesis testing problem \eqref{eqn: hypo} with subgraph $V^*$ reads
\[  
\ell_{V^*}(G^{(t)}) = \sum\limits_{\substack{i,j \in V^* \\ i<j}} \left[G_{ij}^{(t)} \log  \frac{p_1}{p_0} + (1-G_{ij}^{(t)}) \log\frac{1-p_1}{1-p_0}\right].
\]

When the subgraph $V^*$ is known, we can construct the well-known CUSUM procedure, which was first introduced by Page in \cite{page-biometrica-1954}. The CUSUM statistic can be formed recursively as
\begin{equation}\label{eq:cusum}
S_{t} =  (S_{t-1})^{+} + \ell_{V^*}(G^{(t)}), \ S_{0} = 0,
\end{equation}
where $(x)^{+}:=\max\{x,0\}$. 
The corresponding stopping time is: 
\begin{equation}\label{eq:cusum_stop}
    \tc = \inf\{ t: S_{t} > b \},
\end{equation}
where the threshold $b$ is a pre-set constant to control the false alarm rates. 

By Jensen's inequality, it is easy to show that the expectation of the increment term $\ell_{V^*}(G^{(t)})$ in \eqref{eq:cusum} is negative under the pre-change regime, and positive in the post-change regime. Therefore, the CUSUM statistic $S_t$ will have a positive drift after the change happens, enabling its efficient detection of the change-point. The  CUSUM procedure was shown to have strong optimality properties in
\cite{Lorden1971,mous-astat-1986,ritov-astat-1990,lai-ieeetit-1998}. In particular, it attains the minimal worst-case detection delay among all testing procedures that satisfy certain false alarm constraint.

However, the CUSUM statistic \eqref{eq:cusum} cannot be used directly when the changed subgraph $V^*$ is {\it unknown}. Therefore, we adopt the GLR framework \cite{lai-ieeetit-1998,marangoni2015sequential}, also known as the scan test \cite{arias2014community,he2018sequential}. The GLR test was originally developed for change detection for parametric families when the post-change parameter is unknown, by substituting the maximum likelihood type estimators. Here instead of estimating the post-change parameters, we estimate the unknown post-change subgraph by maximizing the likelihood function of samples in the past. More specifically, the GLR statistic at time $t$ is defined as
\[
\mathcal S_t = \max_{1\leq k\leq t} \max_{V \in \mathcal V} R_{t,k,V},
\]
where $R_{t,k,V}$ is the log-likelihood ratio of samples $G^{(k)},\ldots,G^{(t)}$, assuming the changed subgraph is $V$ and the change-point $\tau=k$, which can be derived as: 
\begin{equation}\label{eq:ratio}
R_{t,k,V} = \sum\limits_{\substack{i,j \in V \\ i<j}} \sum\limits_{m=k}^t \left[G_{ij}^{(m)} \log  \frac{p_1}{p_0} + (1-G_{ij}^{(m)}) \log\frac{1-p_1}{1-p_0}\right].
\end{equation}

Usually, the GLR statistics constructed this way does not enjoy recursive computations as in \eqref{eq:cusum}; the computation complexity for maximizing over all possible $k\leq t$ will increase polynomially with time $t$. To save the computational cost, we use the window-limited GLR approach \cite{lai-ieeetit-1998}, which we detailed as follows. At each time $t$, we form statistics by aggregating test statistic under different subgraphs $V$, over time 
\[
(t-m_\alpha,t-m'_\alpha),
\] 
where $m'_\alpha$ is the minimal number of samples to ensure the reliable estimation of the subgraph, and $m_\alpha$ is the maximum number of samples to look back in order to reduce the computation time. 
The window-limited GLR stopping time is \cite{marangoni2015sequential}
\begin{equation}\label{eq:GLR_stat}
\twg = \inf\left\{ t: \max_{ t-m_\alpha \leq k \leq t-m'_\alpha} \max_{V\in\mathcal V} R_{t,k,V} > b \right\}, 
\end{equation}
where $b$ is the threshold, and $R_{t,k,V}$ is the log-likelihood ratio as defined in \eqref{eq:ratio}. 

% We list some useful properties for this log-likelihood ratio statistic $R_{t,k,V}$. For two subgraphs $V$ and $V'$, assume that 
% \[
% \left| V \right| = \left| V' \right| = n, \text{ and } \left| V \cap V' \right| = n-r,
% \] 
% %the symmetric difference $\left| V \bigtriangleup V' \right|  = \left| (V \setminus V') \cup (V' \setminus V)  \right| = 2r$, 
% i.e., they have $n-r$ nodes in common. In such case, it is easy to see that
% \[  
% \left| \sum\limits_{\substack{i,j \in V \\ i<j}} G_{ij}^{(m)} - \sum\limits_{\substack{i,j \in V' \\ i<j}} G_{ij}^{(m)} \right| \leq r(n-r) + \frac{r(r-1)}{2}.
% \]
% Since there are $N$ nodes in total, for any fixed subgraph $V$ with $n$ nodes, the number of subgraphs that have $n - r$ common nodes with $V$ is \[{n\choose n-r }{N-n \choose r }.\]

We are further interested in knowing which subgraph contains the change in the graph structure. Once we have detected the change-point as $k$, we can choose a post-change interval $(k,t)$. The test statistic $R_{t,k, V}$ is useful in localizing the change, as the subgraph $V^{*}$ that maximizes $R_{t,k, V}$, over all possible subgraphs in $\mathcal V$, is the {\it maximum likelihood estimate} (MLE) of the subgraph containing the change,
\begin{equation}\label{eq:densestn}
\widehat V_{k,t} = \argmax_{V\in \mathcal{V}} R_{t,k, V}.
\end{equation}
It is worth mentioning that the solution to \eqref{eq:densestn} is the so-called ``densest $n$ subgraph'' \cite{feige2001dense} and it is an NP-hard problem; there is no constant approximation ratio algorithm due to its core hardness. We use the greedy procedure in \cite{feige2001dense} to approximate the maximum likelihood estimate $\widehat V_{k,t}$. 
For completeness, we restate the procedure here: ``Sort the vertices by order of their degree. Let $H$ denote the $n/2$ vertices with highest degrees in the graph $G$. Sort the remaining vertices by the number of neighbors they have in $H$. Let $C$ denote the $n/2$ vertices in $G \setminus H$ with the largest number of neighbors in $H$. Return $H \cup C$.''

\section{Optimality}\label{sec:optimality}

We first introduce two metrics commonly used to characterize the performance of detection procedures in sequential change detection. 

The average run length (ARL) is defined as the average time between false alarms when there is no change; it can be denoted as $\mathbb{E}_\infty[T]$, where $\mathbb{E}_\infty$ is the expectation under the pre-change measure (i.e., the change-point is at $\infty$). 
The expected detection delay (EDD) refers to the expected delay in detecting the change. 
There are two common definitions for EDD as introduced in \cite{Lorden1971} and \cite{poll-astat-1985}. We adopt the one in \cite{Lorden1971} as follows:
\begin{equation}\label{eq:edd}
\bar{\mathbb{E}}_0(T) = \sup_{\tau \geq 1} \esssup \mathbb{E}_\tau[(T-\tau)^{+}|G^{(1)},\ldots,G^{(\tau-1)}],    
\end{equation}
where the essential supremum is taken over all possible change-point $\tau$ and realizations $G^{(1)},\ldots,G^{(\tau-1)}$ before the change; and $\mathbb{E}_\tau$ is the expectation under the probability measure that the change-point equals to $\tau$. 

The goal is to minimize EDD subject to the ARL constraint that $\mathbb{E}_\infty[T]\geq\gamma$ for a positive constant $\gamma$. 

The lower bound to the worst-case EDD $\bar{\mathbb{E}}_0(T)$ was given in  \cite[Theorem~1]{lai-ieeetit-1998}. 
More specifically, we restate this lower bound in our setting \eqref{eqn: hypo}.
\begin{theorem}[\cite{lai-ieeetit-1998}]\label{thm:optimalEDD}
As $\gamma \rightarrow \infty$, we have 
\[
\inf \left\{ \bar{\mathbb{E}}_0(T): \mathbb{E}_\infty(T) \geq \gamma  \right\} \geq (I^{-1} + o(1))\log\gamma,
\]
where 
\[
I = {n \choose 2}\left(p_1\log \frac{p_1}{p_0} + (1-p_1)\log \frac{1-p_1}{1-p_0}\right),
\] 
is the Kullback–Leibler (KL) divergence between the graphical distribution $\ER(n,p_1)$ and $\ER(n,p_0)$ on the changed subgraph $V^*$ with $|V^*|=n$.
\end{theorem}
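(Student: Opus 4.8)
The plan is to reduce the statement to the classical Lorden--Lai lower bound \cite{Lorden1971,lai-ieeetit-1998} for a \emph{simple-versus-simple} change, and then verify that the hypotheses of that bound hold in the present Erd\H{o}s--R\'enyi setting. The key observation is that $\bar{\mathbb{E}}_0(T)$ takes a supremum over the change-point $\tau$, and the realized anomalous subgraph is allowed to be any $V^* \in \mathcal V$ of size $n$. Hence it suffices to fix one such $V^*$ and lower bound the worst-case delay against this single alternative: a lower bound against one alternative is automatically a lower bound against the whole family, and knowing $V^*$ only makes detection easier, so the unknown-subgraph delay dominates. For this fixed alternative the relevant process is the i.i.d.\ per-sample log-likelihood ratio $Z_t = \ell_{V^*}(G^{(t)})$, with $\mathbb{E}_\infty[Z_t] < 0$ by Jensen's inequality and $\mathbb{E}_1[Z_t] = \binom{n}{2}(p_1\log\frac{p_1}{p_0} + (1-p_1)\log\frac{1-p_1}{1-p_0}) = I$, which is exactly the stated KL divergence. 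Since $G^{(t)}_{ij}\in\{0,1\}$ and $p_0,p_1\in(0,1)$, each $Z_t$ is bounded, so all moment and $r$-quick-convergence conditions required by Lai's theorem hold trivially.

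To reprove the core bound, fix $\epsilon\in(0,1)$ and set $m = m_\gamma = \lfloor (1-\epsilon) I^{-1}\log\gamma\rfloor$. For any $T$ with $\mathbb{E}_\infty[T]\geq\gamma$, I would show that for a suitable worst-case $\tau$ one has $P_\tau(T-\tau\geq m)\to 1$, which forces $\mathbb{E}_\tau[(T-\tau)^+\mid \mathcal F_{\tau-1}]\geq m(1-o(1))$. The first tool is the Wald likelihood-ratio (change-of-measure) identity, giving $P_\tau(\tau\leq T<\tau+m) = \mathbb{E}_\infty[\mathbf{1}\{\tau\leq T<\tau+m\}\exp(\sum_{s=\tau}^{T}Z_s)]$. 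Splitting on whether the running maximum $M_m = \max_{\tau\leq s<\tau+m}\sum_{t=\tau}^{s}Z_t$ exceeds a level $c := (1+\delta)\log\gamma$ yields $P_\tau(\tau\leq T<\tau+m) \leq e^{c}\,P_\infty(\tau\leq T<\tau+m) + P_\tau(M_m>c)$.

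Two controls then finish the argument. First, by the strong law of large numbers under the post-change law, $M_m/m\to I$, while $c/m \to (1+\delta)I/(1-\epsilon) > I$, so $P_\tau(M_m>c)\to 0$; boundedness of $Z_t$ makes this concentration step immediate. Second, since each time index lies in at most $m$ of the windows $\{\tau\leq T<\tau+m\}$, one has $\sum_{\tau=1}^{N}P_\infty(\tau\leq T<\tau+m)\leq m$, and because $\mathbb{E}_\infty[T]\geq\gamma$ forces the law of $T$ to spread across $\Omega(\gamma/m)$ windows, there exists $\tau^*\leq N$ with $P_\infty(\tau^*\leq T<\tau^*+m)\leq m/N$. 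Choosing $N$ polynomially larger than $\gamma^{1+\delta}$ drives $e^{c}P_\infty(\tau^*\leq T<\tau^*+m)\to 0$, and conditioning on $\{T\geq\tau^*\}\in\mathcal F_{\tau^*-1}$ together with the essential supremum in \eqref{eq:edd} discards the pre-$\tau^*$ contribution. Combining the pieces gives $\mathbb{E}_{\tau^*}[(T-\tau^*)^+\mid\mathcal F_{\tau^*-1}]\geq (1-\epsilon)I^{-1}\log\gamma\,(1-o(1))$, and letting $\epsilon\downarrow 0$ yields $\bar{\mathbb{E}}_0(T)\geq (I^{-1}+o(1))\log\gamma$.

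I expect the main obstacle to be reconciling the ARL constraint --- which is only a lower bound on a \emph{mean} --- with the need to control a short-horizon stopping probability; a large mean does not by itself prevent $T$ from being small with non-negligible probability (mass could concentrate near zero). The resolution is precisely the pigeonhole-over-$\tau$ argument combined with the essential supremum, which lets me select a change-time landing in a window the detector is unlikely to have already triggered. The remaining steps --- the change-of-measure identity, the concentration of $M_m$, and the computation of $I$ --- are routine given the boundedness of the Erd\H{o}s--R\'enyi log-likelihood increments.
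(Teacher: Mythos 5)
The paper itself gives no proof of this statement: Theorem~\ref{thm:optimalEDD} is quoted verbatim from Lai's Theorem~1, and the only content specific to this paper is the identification of $I$ as the per-sample KL divergence $\binom{n}{2}\left(p_1\log\frac{p_1}{p_0}+(1-p_1)\log\frac{1-p_1}{1-p_0}\right)$ for the $\ER$ model restricted to $V^*$. Your reduction to a fixed, known $V^*$ (the infimum is over all stopping times, and the bound for the simple-vs-simple problem dominates), your computation of $I$, the change-of-measure decomposition $\mathbb{P}_\tau(\tau\leq T<\tau+m)\leq e^{c}\,\mathbb{P}_\infty(\tau\leq T<\tau+m)+\mathbb{P}_\tau(M_m>c)$, and the concentration step $\mathbb{P}_\tau(M_m>c)\to 0$ are all correct and reproduce the classical Lorden--Lai machinery.

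The gap is in your pigeonhole step. You select $\tau^*\leq N$ with $\mathbb{P}_\infty(\tau^*\leq T<\tau^*+m)\leq m/N$ and take $N$ polynomially larger than $\gamma^{1+\delta}$ so that $e^{c}\cdot m/N\to 0$ with $c=(1+\delta)\log\gamma$. But what you ultimately need to be small is the \emph{conditional} probability $\mathbb{P}_\infty(T<\tau^*+m\mid\mathcal{F}_{\tau^*-1})$ on a positive-$\mathbb{P}_\infty$-measure subset of $\{T\geq\tau^*\}$; only there does $\mathbb{E}_{\tau^*}[(T-\tau^*)^+\mid\mathcal{F}_{\tau^*-1}]\geq m(1-o(1))$ hold on a realization that the essential supremum in \eqref{eq:edd} can see. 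A small unconditional window probability does not deliver this: converting it to a conditional one divides by the probability of the conditioning atom, which can be arbitrarily small, and worse, the $\tau^*$ your argument selects may satisfy $\mathbb{P}_\infty(T\geq\tau^*)=0$ outright (e.g.\ if $T\leq 2\gamma$ almost surely while $\mathbb{E}_\infty[T]\geq\gamma$, every $\tau^*>2\gamma$ is vacuous), in which case $(T-\tau^*)^+\equiv 0$ and the bound collapses. The standard repair is to run the pigeonhole in ratio form: from $\sum_{\tau}\mathbb{P}_\infty(\tau\leq T<\tau+m)\leq m$ together with $\sum_{\tau}\mathbb{P}_\infty(T\geq\tau)=\mathbb{E}_\infty[T]\geq\gamma$, there must exist $\tau^*$ with $\mathbb{P}_\infty(\tau^*\leq T<\tau^*+m)\leq(m/\gamma)\,\mathbb{P}_\infty(T\geq\tau^*)$; this forces $\mathbb{P}_\infty(T\geq\tau^*)>0$ and, by a conditional Markov inequality, gives $\mathbb{P}_\infty(T<\tau^*+m\mid\mathcal{F}_{\tau^*-1})\leq 2m/\gamma$ on at least half of $\{T\geq\tau^*\}$. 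This in turn requires taking $c$ slightly \emph{below} $\log\gamma$ (say $c=(1-\epsilon/2)\log\gamma$, which still satisfies $c/m>I$ when $m=(1-\epsilon)I^{-1}\log\gamma$), not $(1+\delta)\log\gamma$, so that $e^{c}\cdot 2m/\gamma\to 0$. With those two corrections your argument closes and recovers Lai's Theorem~1 in this setting.
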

Theorem~\ref{thm:optimalEDD} means that for any detection procedure with ARL greater than $\gamma$, the minimal detection delay is of the order of $\log\gamma/I$. Therefore, a detection procedure is called {\it first-order asymptotic optimal} if its EDD equals to  $\log\gamma/I(1+o(1))$ asymptotically as $\gamma\rightarrow\infty$. Below, we prove that the detection rule \eqref{eq:GLR_stat} achieves the lower bound in Theorem~\ref{thm:optimalEDD}. 

We first consider the pre-change regime and state the following lemma:
\begin{lemma}\label{thm:lower_bound}
For the stopping time \eqref{eq:GLR_stat}, we have
\begin{equation}\label{eq:arl_constr}
\sup_{\tau \geq 1} \mathbb{P}_\infty( \tau \leq \twg < \tau + m_\alpha) \leq 2m_\alpha  e^{-b} {N \choose n}.     
\end{equation}
\end{lemma}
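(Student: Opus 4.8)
The plan is to bound the probability that a false alarm occurs within any window of length $m_\alpha$ under the pre-change measure $\mathbb{P}_\infty$. The event $\{\tau \leq \twg < \tau + m_\alpha\}$ means the stopping rule \eqref{eq:GLR_stat} fires at some time $t \in [\tau, \tau+m_\alpha)$, which requires that for this $t$ there exist a subgraph $V \in \mathcal{V}$ and a lag $k \in [t-m_\alpha, t-m'_\alpha]$ with $R_{t,k,V} > b$. By a union bound, I would write
\begin{equation*}
\mathbb{P}_\infty( \tau \leq \twg < \tau + m_\alpha) \leq \sum_{t=\tau}^{\tau+m_\alpha-1} \sum_{V\in\mathcal V} \sum_{k=t-m_\alpha}^{t-m'_\alpha} \mathbb{P}_\infty(R_{t,k,V} > b).
\end{equation*}
Since $|\mathcal V| = d \leq {N \choose n}$, the key reduction is to control each term $\mathbb{P}_\infty(R_{t,k,V} > b)$ by a quantity on the order of $e^{-b}$, after which the double sum over $t$ and $k$ (each of length at most $m_\alpha$) and the factor $d$ give the stated bound, with the extra factor of $2$ absorbing the counting of the windowed lags.

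The heart of the argument is the tail bound $\mathbb{P}_\infty(R_{t,k,V} > b) \leq e^{-b}$, which follows from the standard change-of-measure / Wald-type inequality for log-likelihood ratios. The statistic $R_{t,k,V}$ as defined in \eqref{eq:ratio} is precisely the log-likelihood ratio of observations $G^{(k)},\ldots,G^{(t)}$ between the post-change model (subgraph $V$ with parameter $p_1$) and the pre-change model (all edges at $p_0$). I would use the fundamental identity that for a log-likelihood ratio statistic $R$, the measure under the alternative $\mathbb{P}_1$ has density $e^{R}$ with respect to $\mathbb{P}_\infty$ on the relevant $\sigma$-field. Hence
\begin{equation*}
\mathbb{P}_\infty(R_{t,k,V} > b) = \mathbb{E}_1\!\left[ e^{-R_{t,k,V}} \mathbf{1}\{R_{t,k,V} > b\}\right] \leq e^{-b}\,\mathbb{P}_1(R_{t,k,V} > b) \leq e^{-b},
\end{equation*}
which is exactly the per-term bound needed.

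Combining the two pieces, the triple sum has at most $m_\alpha \cdot {N \choose n} \cdot m_\alpha$ terms naively; the claimed form $2 m_\alpha e^{-b} {N \choose n}$ is tighter, so I would be careful about the indexing of the windowed maximum. The likely source of the clean factor $2m_\alpha$ rather than $m_\alpha^2$ is that once we fix the stopping time $\twg = t$ (so the alarm fires exactly at $t$, not before), the relevant sum over $t$ collapses, leaving only the sum over lags $k$ of length at most $m_\alpha - m'_\alpha \leq m_\alpha$, and the factor $2$ comes from bounding both the range of $t$ and $k$ jointly; I would reconcile this by summing over $t$ first and using $\sup_\tau$ to align the window. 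The main obstacle I anticipate is precisely this bookkeeping: getting the $2m_\alpha$ constant rather than a crude $m_\alpha^2$ requires carefully noting that for each firing time $t$ only the lag index $k$ ranges over a window, while $t$ itself ranges over the fixed interval $[\tau, \tau+m_\alpha)$, and then observing that the essential count is $m_\alpha$ choices of the window position times $2$. The tail bound itself is routine; the care lies in the union bound's structure and ensuring independence across distinct edges is correctly invoked when asserting $R_{t,k,V}$ is a genuine log-likelihood ratio under $\mathbb{P}_\infty$.
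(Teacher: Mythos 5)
Your core change-of-measure inequality is correct, and it is the same engine the paper uses; but as written your union bound does not yield the stated constant, and your attempted repair of that step is not a valid argument. Summing over the firing time $t\in[\tau,\tau+m_\alpha)$, the lag $k\in[t-m_\alpha,t-m'_\alpha]$, and $V\in\mathcal V$, with the per-term bound $\mathbb{P}_\infty(R_{t,k,V}>b)\leq e^{-b}$, gives $m_\alpha(m_\alpha-m'_\alpha+1)e^{-b}{N\choose n}$, i.e.\ an $m_\alpha^2$ factor. Your proposed reconciliation --- that fixing $\twg=t$ ``collapses'' the sum over $t$ --- does not go through: the events $\{\twg=t\}$ are disjoint, but once you bound $\mathbb{P}_\infty(\twg=t)$ by a sum of probabilities of the (non-disjoint) events $\{R_{t,k,V}>b\}$, you are back to the full double count. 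There is no way to recover the factor $2m_\alpha$ by reordering a term-by-term union bound over $(t,k)$.

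The missing idea is to index the union bound by the candidate change-point $k$ alone and to handle the entire range of $t$ for each fixed $k$ with a single maximal inequality. If the alarm fires at some $t\in[\tau,\tau+m_\alpha]$ because $R_{t,k,V}>b$ for some admissible $k$, then necessarily $k\in[\tau-m_\alpha,\tau+m_\alpha]$ (only $2m_\alpha+1$ values) and $t\leq k+m_\alpha$. The paper therefore introduces, for each such $k$, the stopping time $\tau_k=\inf\{t\geq k+m'_\alpha:\ \max_{V}R_{t,k,V}\geq b\}$ and bounds $\mathbb{P}_\infty(\tau_k\leq k+m_\alpha)$ by applying Wald's likelihood ratio identity \emph{at the stopping time} $\tau_k$, using the mixture measure $Q^k=\sum_{V\in\mathcal V}\mathbb{P}_V^k$ whose Radon--Nikodym derivative on $\mathcal F_{k,t}$ is $L_t=\sum_V e^{R_{t,k,V}}$; since $L_{\tau_k}\geq e^b$ on the event, this costs only $e^{-b}|\mathcal V|\leq e^{-b}{N\choose n}$ per $k$, with no extra factor for the supremum over $t$. (Equivalently, for each fixed $(k,V)$ one can invoke Ville's/Doob's maximal inequality for the nonnegative $\mathbb{P}_\infty$-martingale $t\mapsto e^{R_{t,k,V}}$, which bounds $\mathbb{P}_\infty(\sup_t R_{t,k,V}\geq b)\leq e^{-b}$ --- the uniform-over-$t$ strengthening of your fixed-$t$ bound.) Summing over the $\leq 2m_\alpha$ values of $k$ then gives exactly $2m_\alpha e^{-b}{N\choose n}$. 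Your write-up has the right single-time tail bound but lacks this stopping-time (maximal) version of it, which is precisely what eliminates the extra $m_\alpha$.
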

\begin{proof}
First note that
\begin{equation}\label{eq:conditional} 
\begin{split}
& \mathbb{P}_\infty( \tau \leq \twg \leq \tau + m_\alpha ) \\
\leq & 
\sum_{\tau - m_\alpha \leq k \leq \tau + m_\alpha} \mathbb{P}_\infty ( \tau_k \leq k + m_\alpha), 
\end{split}
\end{equation}
where 
\begin{equation}\label{eq:tauk}
\tau_k := \inf\left\{ t \geq k + m'_\alpha: \widehat{V}_{k,t} \in \mathcal V, \text{ and } R_{t,k,\widehat{V}_{k,t}} \geq b \right\}. 
\end{equation}
To analyze $\mathbb{P}_\infty(\tau_k \leq k + m_\alpha)$, we use a {\it change-of-measure} argument. Let $\mathbb{P}_V^k$ denote the probability measure under which the distribution of $G^{(i)}$ is $\ER(N,p_0)$ for $i < k$, and the distribution of the subgraph $V$ becomes $\ER(n,p_1)$ for $i \geq k$. 
Define a measure 
\[
Q^{k} =\sum_{V \in \mathcal V} \mathbb{P}_V^k.
\] 
Since $\mathcal V$ is a finite set, $Q^{k}$ is a finite measure. 
For $t \geq k$, let $\mathcal F_{k,t}$ denote the sigma-algebra generated by $G^{(k)},\ldots,G^{(t)}$. The Radon-Nikodym derivative of the restriction of measure $Q^{k}$ to $\mathcal F_{k,t}$ relative to the restriction of $\mathbb{P}_\infty$ to $\mathcal F_{k,t}$ is 
\[
L_t =  \sum_{V \in \mathcal V} \exp\left\{R_{t,k,V} \right\}.
\]
Hence by Wald's likelihood ratio identity, we have
\[
\begin{aligned}
\mathbb{P}_\infty(\tau_k \leq k + m_\alpha) & = \int_{\{\tau_k \leq k + m_\alpha\}} L^{-1}_{\tau_k} dQ^k
 \\
 & = \sum_{V \in \mathcal V} \int_{\{\tau_k \leq k + m_\alpha\}} L^{-1}_{\tau_k} dP_V^k.
 \end{aligned}
\]
Since
\begin{equation}\label{eq:ineq}
L_{\tau_k} = \sum_{V \in \mathcal V} \exp\left\{R_{\tau_k,k,V} \right\} \geq \exp\left\{R_{\tau_k,k,\widehat{V}_{k,t}}\right\}  \geq e^b,
\end{equation}
where the last inequality is due to the definition of $\tau_k$ in \eqref{eq:tauk}, we have that
\[
\begin{aligned}
\sup_{k} \mathbb{P}_\infty ( \tau_k \leq k + m_\alpha) & = \sup_{k} \sum_{V \in \mathcal V} \int_{\{\tau_k \leq k + m_\alpha\}} L^{-1}_{\tau_k} dP_V^k \\
& \leq  e^{-b}|\mathcal V| \leq e^{-b} {N \choose n}.
\end{aligned}
\]
Substitute into \eqref{eq:conditional}, we have
\[ 
\mathbb{P}_\infty( \tau \leq \twg \leq \tau + m_\alpha )  \leq 
2m_\alpha  e^{-b} {N \choose n}.
\]
\end{proof}

%\begin{remark}
%For an exact computation of \eqref{eq:ineq}, the set $\mathcal V$ can be decomposed into $n+1$ subsets:
%\[
%\mathcal V_{r} = \left\{ V' \in \mathcal V: \left| V' \bigtriangleup \hat{V}_{k,t} \right| = 2r \right\} , \; 0 \leq r \leq n
%\]
%where $A \bigtriangleup B$ denotes the symmetric difference between two sets $A$ and $B$.
%Therefore 
%\begin{align*}
%& \frac{ L_t} {  \exp(R(t,k,\hat{V}_{k,t}))} \\
%&=\sum_{V' \in \mathcal V} \exp\left\{ -(R_{t,k,\hat{V}_{k,t}} - R_{t,k,V'} ) \right\} \\
%& = \sum_{r=0}^n \sum_{V' \in \mathcal V_r} \exp\left\{ -(R_{t,k,\hat{V}_{k,t}} - R_{t,k,V'} ) \right\} \\
%& \geq 1 + \sum_{r=1}^{n-1} {n\choose n-r }{N-n \choose r }  e^{-r(n-r)(t-k+1)\log \left( \frac{p_1(1-p_0)}{p_0(1-p_1)} \right) } \\
%& \quad +  {N-n \choose n } e^{-{n\choose 2}(t-k+1)\log \left( \frac{p_1(1-p_0)}{p_0(1-p_1)} \right) } \\
%& \approx 1 (order ...),
%\end{align*}
%since the second term and the third term are relatively small compared with 1. By Bernstein type inequalities, it is also possible to give the high probability bounds....
%\end{remark}

%therefore
%\[
%\sup_{k} \mathbb{P}_\infty ( \tau_k \leq k + m_\alpha) \leq \frac{1}{{n\choose n-r }{N-n \choose r }}  e^{2r(n-r)m_\alpha \log \left( \frac{p_1(1-p_0)}{p_0(1-p_1)} \right) } e^{-\gamma} \left( {N \choose n} + o(1) \right),
%\]
%so we have
%\[ 
%\mathbb{P}_\infty( \tau \leq T \leq \tau + m_\alpha )  \leq 
%2m_\alpha  \frac{1}{{n\choose n-r }{N-n \choose r }}  e^{2r(n-r)m_\alpha  \log \left( \frac{p_1(1-p_0)}{p_0(1-p_1)} \right) } e^{-\gamma} \left( {N \choose n} + o(1) \right) = \alpha
%\]
Given the condition \eqref{eq:arl_constr} in Lemma~\ref{thm:lower_bound}, by \cite[Theorem~4]{lai-ieeetit-1998}, we have the following optimality results for our setup \eqref{eqn: hypo}.
\begin{theorem}\label{thm:optim}
For the stopping time \eqref{eq:GLR_stat}, if the window size $m_\alpha$ satisfies
\[
\lim\inf_{\alpha\rightarrow 0} \frac{m_\alpha }{\left| \log\alpha \right|} > I^{-1},
\quad \log m_\alpha = o(\log\alpha),
\]
and the threshold $b$ satisfies
\begin{equation}\label{eq:b}
 2m_\alpha  e^{-b} {N \choose n} = \alpha,
\end{equation}
then we have 
\begin{equation}\label{eq:ARL}
\mathbb{E}_\infty[\twg]\geq (\frac12-\alpha)(\frac{m_\alpha}{2\alpha}-1),
\end{equation}
and as $\alpha \rightarrow 0$,
\[
\bar{\mathbb{E}}_0(\twg) \leq (I^{-1} + o(1)) b, \quad \text{as} \; b \sim |\log\alpha| \rightarrow \infty.
\]
Therefore the stopping rule \eqref{eq:GLR_stat} is asymptotically optimal.
\end{theorem}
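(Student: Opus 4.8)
The plan is to verify that the window-limited rule \eqref{eq:GLR_stat} meets the hypotheses of \cite[Theorem~4]{lai-ieeetit-1998} and then invoke that result, treating the two deliverables separately: the ARL lower bound \eqref{eq:ARL} follows from the pre-change false-alarm control already in hand, while the EDD upper bound follows from a post-change drift argument. The starting point is Lemma~\ref{thm:lower_bound} together with the calibration \eqref{eq:b}, which immediately gives the uniform window bound $\sup_{\tau\geq 1}\mathbb{P}_\infty(\tau \leq \twg < \tau + m_\alpha) \leq \alpha$.

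For the ARL, I would partition the time axis into consecutive blocks of length $m_\alpha$ and bound the chance that $\twg$ falls in each block. Because the window-limited statistic at time $t$ depends only on the $m_\alpha$ most recent observations, stopping in one block is nearly decoupled from stopping in an earlier block, and the uniform window bound shows that, conditioned on surviving past a block, the probability of stopping in the next is at most (roughly) $2\alpha$. Summing the resulting geometric-type series over blocks yields $\mathbb{E}_\infty[\twg] \geq (\frac12-\alpha)(\frac{m_\alpha}{2\alpha}-1)$, which is exactly the block-counting lower bound of \cite[Theorem~4]{lai-ieeetit-1998} specialized to our window probability $\alpha$.

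For the EDD, I would fix the change-point $\tau$ and analyze $R_{t,k,V^*}$ in \eqref{eq:ratio} evaluated at the true subgraph $V^*$ with $k$ just after $\tau$. Under $\mathbb{P}_\tau$, the per-step increment summed over the $\binom{n}{2}$ edges of $V^*$ is i.i.d., bounded (being an affine function of Bernoulli variables), and has mean equal to the KL divergence $I$, so by the strong law $R_{t,k,V^*}/(t-k)\to I$ almost surely and the first time it exceeds $b$ is $(I^{-1}+o(1))b$, the renewal overshoot contributing only to the $o(1)$ term. Since $\max_{V\in\mathcal V}R_{t,k,V}\geq R_{t,k,V^*}$, the rule stops no later than this. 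I must then check that the look-back window actually contains the needed horizon: the calibration \eqref{eq:b} gives $b=|\log\alpha|+\log m_\alpha+O(1)$, so $\log m_\alpha=o(\log\alpha)$ forces $b\sim|\log\alpha|$, and combined with $\liminf_{\alpha\to 0}m_\alpha/|\log\alpha|>I^{-1}$ this yields $m_\alpha>(I^{-1}+\epsilon)b$ eventually, placing the $\approx b/I$ samples needed to cross $b$ inside $(t-m_\alpha,t-m'_\alpha)$. The essential supremum in \eqref{eq:edd} is handled uniformly because the post-change drift argument is independent of the pre-change realization, giving $\bar{\mathbb{E}}_0(\twg)\leq(I^{-1}+o(1))b$.

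Combining the two bounds, the ARL estimate shows $\mathbb{E}_\infty[\twg]\to\infty$ as $\alpha\to 0$, so taking $\gamma=\mathbb{E}_\infty[\twg]$ gives $b\sim|\log\alpha|\sim\log\gamma$ and hence $\bar{\mathbb{E}}_0(\twg)\leq(I^{-1}+o(1))\log\gamma$, matching the lower bound of Theorem~\ref{thm:optimalEDD} and establishing first-order asymptotic optimality. The main obstacle is not the drift heuristic but verifying that the graph log-likelihood increments satisfy the regularity conditions (bounded increments and the attendant SLLN/uniform-integrability hypotheses) under which \cite[Theorem~4]{lai-ieeetit-1998} applies, and in particular controlling the maximization over the combinatorially many subgraphs $V\in\mathcal V$ so that scanning inflates the delay only through the $\log\binom{N}{n}$ term already absorbed into $b$ via the calibration \eqref{eq:b}.
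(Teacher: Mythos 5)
Your proposal is correct and follows essentially the same route as the paper: the ARL bound comes from Lemma~\ref{thm:lower_bound} plus the calibration \eqref{eq:b} via the block-counting argument of \cite[Theorem~4]{lai-ieeetit-1998}, and the EDD bound comes from reducing to the statistic evaluated at the true subgraph $V^*$ (since $\max_{V\in\mathcal V}R_{t,k,V}\geq R_{t,k,V^*}$) and applying the window-limited CUSUM delay analysis. You merely unpack the renewal and blocking arguments that the paper delegates wholesale to Lai's theorem, so there is no substantive difference in approach.
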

\begin{proof}
For the true subgraph $V^*$, define the window-limited CUSUM rule as
\[
\widetilde T = \inf\left\{ t: \max_{ t-m_\alpha \leq k \leq t-m'_\alpha} R_{t,k,V^*} > b \right\}.
\]
It is obvious that 
\[
\bar{\mathbb{E}}_0(\twg) \leq \bar{\mathbb{E}}_0(\widetilde T)
\]
due to the definition of $\twg$ in \eqref{eq:GLR_stat}. 
Note that for any finite value $N,n$, the constraint \eqref{eq:b} 
implies that we can choose the threshold as $b\sim|\log\alpha|$.
By \cite[Theorem~4]{lai-ieeetit-1998}, we have as $b \sim |\log\alpha| \rightarrow \infty$,
\[
\bar{\mathbb{E}}_0(\widetilde T) \leq  (I^{-1} + o(1)) b,
\] 
if $m'_\alpha = o(| \log \alpha|)$. The ARL \eqref{eq:ARL} is proved in \cite{lai-ieeetit-1998} whenever $\twg$ satisfies the condition \eqref{eq:arl_constr}. Further note that 
\[
\log\mathbb{E}_\infty[\twg] \sim | \log \alpha|.
\]  
Therefore, $\twg$ matches the lower bounds in Theorem~\ref{thm:optimalEDD}. In other words, $\twg$ is first-order asymptotically optimal.
\end{proof}

It is worth mentioning that the condition \eqref{eq:b} only yields $b \sim |\log\alpha|$ for finite values of $N,n$. It does not hold for $N$ that diverges to infinity. %\textcolor{red}{do we require certain sparsity constraint such that ${N \choose n}$ is not too large?} 
The analysis here differs from the original GLR framework considered in \cite{lai-ieeetit-1998} and can be viewed as a special case, since the unknown subgraph has finite possibilities while the unknown parameter in parametric models can vary in a continuous space. 

\begin{remark}[Generalization to unknown $p_1$]
When the post-change probability $p_1$ is unknown, we can estimate it using the maximum likelihood estimator when formulating the GLR detection statistic. More specifically, given $G^{(k)},\ldots,G^{(t)}$ and a subgraph $V$, the MLE of $p_1$ is given by
\[
\widehat p_1^{(k,t,V)} = \sum\limits_{i,j\in V,i<j} G_{ij}^{(m)}/{n \choose 2}.
\]
And the resulted detection procedure is
\begin{equation}\label{eq:GLR_stat2}
\twg' = \inf\left\{ t: \max_{ t-m_\alpha \leq k \leq t-m'_\alpha} \max_{V\in\mathcal V} U_{t,k,V} > b \right\},
\end{equation}
where $U_{t,k,V}$ is defined as
\[  
\sum\limits_{\substack{i,j\in V\\ i<j}} \sum\limits_{m=k}^t \left[G_{ij}^{(m)} \log  \frac{\widehat p_1^{(k,t,V)}}{p_0} + (1-G_{ij}^{(m)}) \log\frac{1-\widehat p_1^{(k,t,V)}}{1-p_0}\right].
\]
\end{remark}

\begin{remark}[Generalization to directed graph]
In multivariate time series models or point processes, the underlying connectivity can be modeled as a {\it directed} graph, in contrary to the undirected graph considered in this paper. It is worth mentioning that the aforementioned maximum likelihood estimate and the optimality results still holds for sequential change detection on directed graphical models, since the log-likelihood ratio can be computed in a similar fashion. 
\end{remark}

\section{Numerical Examples}\label{sec:numerical}

We present simulation examples using the Erd\H{o}s-R\'enyi graphical models to visualize the detection procedure \eqref{eq:GLR_stat}. 

The size of the network $N$, i.e., the total number of nodes, is set as $20$ and $50$, respectively. In both cases, we are interested in detecting the change that happens only in a much smaller subgraph consisting of $n=5$ nodes. The pre-change edge-forming probability is set as $p_0=0.2$, and the post-change probability is $p_1=0.5$, i.e., the change increases the intensity of edges within the changed subgraph. 

For numerical issues, we do not compute the worst-case detection delay \eqref{eq:edd} that takes supremum over all possible past observations and over all possible change-points. Instead, we compute an alternative formulation $\mathbb E_1[\twg]$ that can be conveniently evaluated by setting the change-point as $\tau=1$, i.e., the change happens before we take any sample. 

In Fig.~\ref{Fig:EDD_WLGLR}, we compare the EDD of the GLR procedure in \eqref{eq:GLR_stat} and the CUSUM procedure in \eqref{eq:cusum_stop}. The CUSUM statistic serves as a baseline since it is the optimal detection procedure with the smallest detection delay. It is shown that the detection delay of the GLR approach indeed matches the detection delay of CUSUM in first-order (i.e., in the slope). Moreover, it can be seen that the detection delay of the GLR approach tends to increase as we increase the network size $N$, since it becomes more difficult to scan for the right subgraph containing the change.

\begin{figure}[h!]
\begin{center}
\begin{tabular}{cc}
\includegraphics[width = 0.46\linewidth]{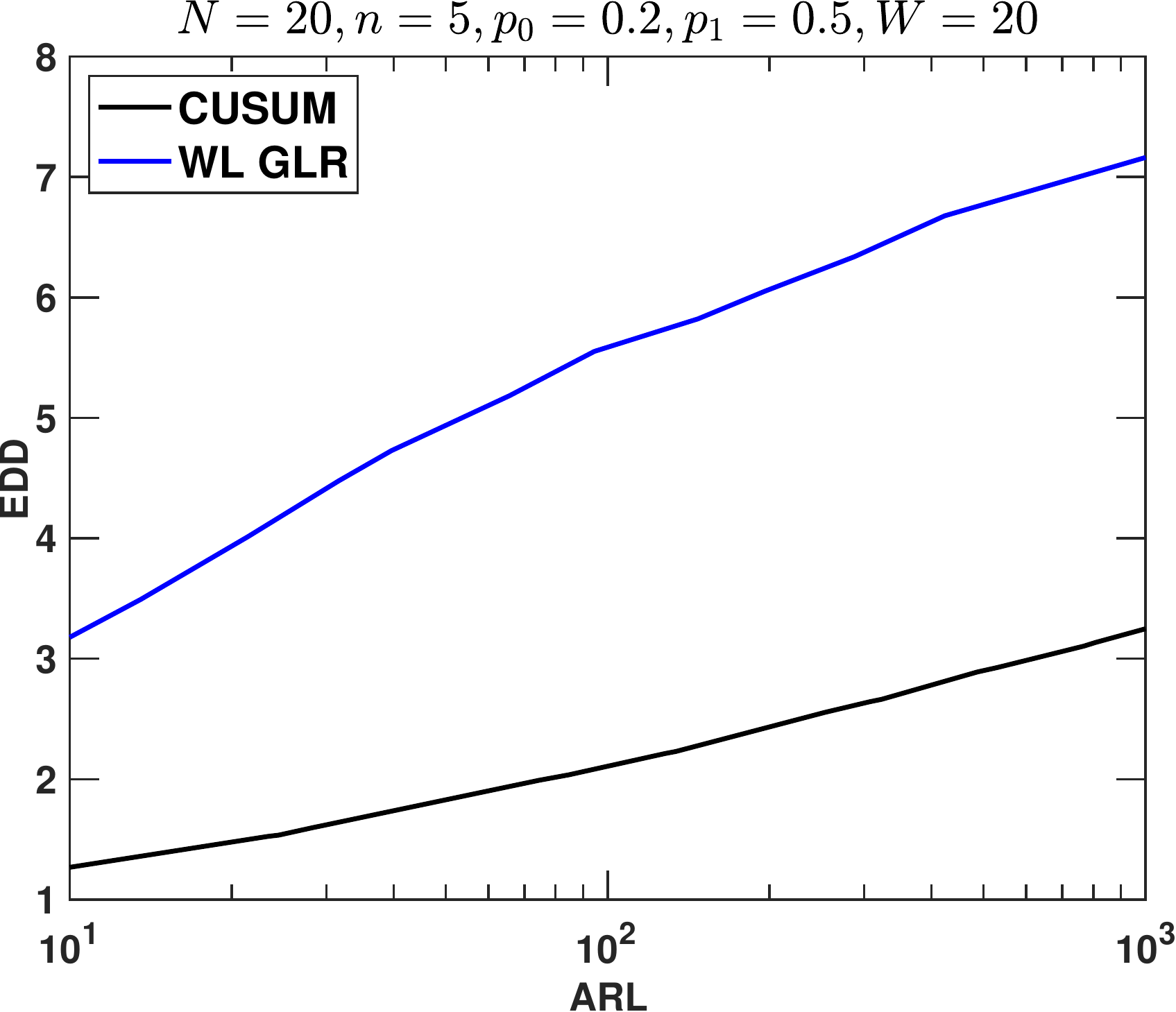} & \includegraphics[width = 0.46\linewidth]{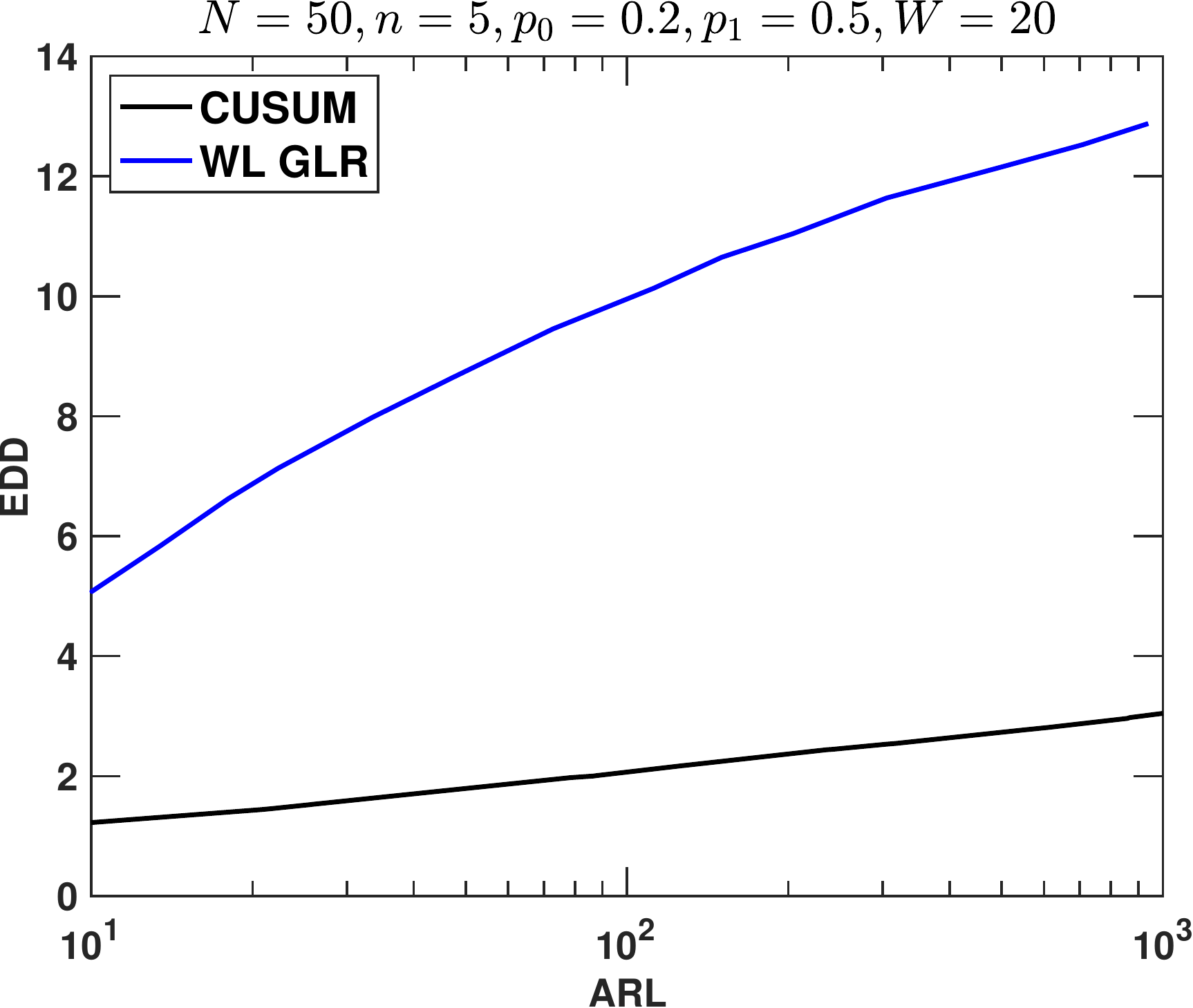}
\end{tabular}
\end{center}
\caption{The EDD/ARL tradeoff for window-limited GLR by graph scanning, and the optimal CUSUM when the subgraph is known. Left: $N=20$; Right: $N=50$.}
\label{Fig:EDD_WLGLR}
\end{figure}

\section{Conclusion}\label{sec:conclusion}
We have revisited the sequential change detection for Erd\H{o}s-R\'enyi graphs. The problem setup can be applied to community detection problems. 
The graph scanning statistic considered in this paper is formed by scanning all possible subgraphs over the whole graph. The detection procedure matches the well-known GLR test and is asymptotically optimal. 
%similar to arthur gretton's scanning method based on witness functions. 

Future direction includes extending the proposed method to more complicated graphical models and to sequences with dependency.
%When the post-change parameter $p_1$ is unknown, it is possible to jointly estimate the probability $p_1$ and the changed subgraph when performing the detection task.
%A conclusion section is not required. Although a conclusion may review the main points of the paper, do not replicate the abstract as the conclusion. A conclusion might elaborate on the importance of the work or suggest applications and extensions. 
Moreover, the framework can be applied to the {\it goodness-of-fit test for local regions}. The global null is that a known graphical distribution $P_0$ (e.g., $\ER(N,p_0)$) is a good fit for all local regions (the whole graph). The alternative is that there is a subgraph such that the underlying distribution distinct from $P_0$ significantly. For each local region, we can compute a local test statistic based on GLR, and compare it with a threshold, which can be set by simulation or the limiting distribution of the test statistic. 

\section*{Acknowledgment}
The work of Liyan Xie and Yao Xie is partially supported by an NSF CAREER Award CCF-1650913, DMS-1938106, DMS-1830210, CCF-1442635, and CMMI-1917624.

\bibliographystyle{IEEEtran}
\bibliography{scan_stat}

\end{document}